\title[Equivariant LG mirror symmetry]{Equivariant Landau--Ginzburg mirror symmetry}
\author[Gu\'er\'e]{J\'er\'emy Gu\'er\'e}
\address{Univ. Grenoble Alpes, CNRS, IF, 38000 Grenoble, France}
\email{jeremy.guere@gmail.com}
\newcommand{\vir}{\mathrm{vir}}
\newcommand{\rk}{\mathrm{rk}}
\newcommand{\KK}{\mathbb{K}}
\newcommand{\grj}{\mathfrak{j}}
\newcommand{\fc}{\mathfrak{c}}
\newcommand{\rR}{\mathfrak{C}}
\newcommand{\CC}{\mathbb{C}}
\newcommand{\ZZ}{\mathbb{Z}}
\newcommand{\QQ}{\mathbb{Q}}
\newcommand{\cO}{\mathcal{O}}
\renewcommand{\(}{\left(}
\renewcommand{\)}{\right)}
\newcommand{\cC}{\mathcal{C}}
\newcommand{\cD}{\mathcal{D}}
\newcommand{\cL}{\mathcal{L}}
\newcommand{\cM}{\mathcal{M}}
\newcommand{\fq}{\mathfrak{q}}
\newcommand{\cS}{\mathrm{Sym}}
\newcommand{\sS}{\mathcal{S}}
\newcommand{\Ch}{\mathrm{Ch}}
\newcommand{\Td}{\mathrm{Td}}
\newcommand{\st}{\mathbf{H}}
\theoremstyle{plain}
\newtheorem{thm}{Theorem}[section]
\newtheorem{pro}[thm]{Proposition}
\newtheorem*{lem*}{Lemma}
\newtheorem{cor}[thm]{Corollary}
\theoremstyle{definition}
\newtheorem{dfn}[thm]{Definition}
\newtheorem{rem}[thm]{Remark}
\newtheorem*{rem*}{Remark}
\newtheorem*{rems*}{Remarks}
\newtheorem*{exa*}{Example}
\newtheorem*{exait*}{\rm \em Example}
\newtheorem*{exadefit*}{\rm \em Example/Definition}
\newtheorem*{cla*}{\rm \em Claim}
\newtheorem*{dfn*}{Definition}
\def\<{\left\langle}
\def\>{\right\rangle}
\begin{document}
\begin{abstract}
We give a new proof of the computation of Hodge integrals we have previously obtained for the quantum singularity (FJRW) theory of chain polynomials.
It uses the classical localization formula of Atiyah--Bott and we phrase our proof in a general framework that is suitable for future studies of gauged linear sigma models (GLSM).
As a by-product, we obtain the first equivariant version of mirror symmetry without concavity, generalizing the work of Chiodo--Iritani--Ruan on the Landau--Ginzburg side.
\end{abstract}

\maketitle

\tableofcontents

\setcounter{section}{-1}

\section{Introduction}
In our previous work \cite{Guere1}, we gave the first genus-zero computation of the virtual cycle in the quantum singularity theory, also called Fan--Jarvis--Ruan--Witten (FJRW) theory \cite{FJRW1,FJRW2,Polish1}, in a range of cases where the state-of-the-art techniques relying on the concavity condition did not apply. As an application, we proved a mirror symmetry theorem for these theories.

Later in \cite{Guere2}, we generalized our results and obtained the first all-genus computation on the moduli space of Landau--Ginzburg spin curves, providing we first cap the virtual cycle with the Euler class of the Hodge vector bundle. It lead to Hodge integral calculations in the quantum singularity (FJRW) theory in a range of cases where the techniques relying on Teleman's reconstruction theorem for generically semi-simple Cohomological Field Theories (CohFTs) did not apply. As an application, we proved in \cite{Guere3} the DR/DZ conjecture for $3$-spin, $4$-spin, and $5$-spin theories, that is the equivalence of the Double Ramification (DR) hierarchy with the Dubrovin--Zhang (DZ) hierarchy for these theories.

Interestingly, there are up to now no counterparts of \cite{Guere1,Guere2} for Gromov--Witten theory of hypersurfaces in weighted projective spaces, although such a parallel story should appear under the light of Landau--Ginzburg/Calabi--Yau correspondence \cite{LGCY}.
Genus-zero Gromov--Witten invariants of hypersurfaces in weighted projective spaces are still unknown, as soon as the convexity condition fails. Even for projective hypersurfaces, there is no general description of Hodge integrals in positive genus.

Both papers \cite{Guere1,Guere2} relied on a new technique based on the notion of recursive complexes and it has been our main focus for the last five years to understand how to carry this technique into the Gromov--Witten side.
To achieve this ambitious goal, we shed new light on our previous results by changing our strategy to a more Gromov--Witten-like approach: we make use of the localization formula of Atiyah--Bott \cite{Atiyah}, developped in the algebraic category by \cite{Edidin,Edidin2}, to carry the computation of Hodge integrals in FJRW theory.
We then give a new and shorter proof of the results in \cite{Guere1,Guere2}.
We also upgrade our mirror theorem \cite[Theorem 4.4]{Guere1} to an equivariant version of it, in the spirit of \cite[Section 4.3]{LGCY}, see Theorem \ref{equiMS}.
We highlight the fact it was out of reach with the previous technique.

Importantly, we phrase our new method in a very general framework that is relevant when working with Landau--Ginzburg models. We thus believe it is suitable for the study of any Gauged Linear Sigma Model (GLSM) \cite{GLSM}.
Indeed, following our work \cite[Section 6]{Guere8}, we see that the definition of virtual cycles in hybrid GLSMs can be phrased as a localized Chern character of a two-periodic complex on a big moduli space denoted $\square$ in \cite{Guere8}, and that the picture in \cite[Section 1.5]{Guere8} is a special case of the one we describe in Section \ref{section1}.
Furthermore, it is worth noticing that Gromov--Witten theory of a complete intersection in a toric Deligne--Mumford (DM) stack is a special instance of a GLSM via the comparaison \cite{Jeongseok}.
In particular, it is absolutely clear that the strategy developed in Section \ref{section1} applies, with little changes compared to Section \ref{section2}, to the case of hypersurfaces in weighted projective spaces which are defined by chain polynomials.
However, writing this paper, we discovered a more direct way to pursue this goal and we decided to leave this result to another paper \cite{Guere10}.



\noindent
\textbf{Acknowledgement.}
The author is grateful to Alexander Polishchuk who suggested first to look for a proof of the results in \cite{Guere1,Guere2} using the localization formula.

\section{Localization formula for localized Chern characters}\label{section1}
Here, we describe the localization method that we apply to FJRW theory in the next section.
We explain it in a more general framework, so that it can serve as a reference for future works regarding GLSM models.

\subsection{Localized Chern character}\label{section1.1}
We work over an arbitrary field $\KK$ and we consider the following set-up:
\begin{center}
\begin{tikzpicture}[scale=1.5]
\node (A) at (-1,1) {$Y$};
\node (B) at (0,1) {$X$};
\node (C) at (0,0) {$S$};
\node (D) at (0.5,0.5) {$E$};
\node (E) at (0.5,1.5) {$V$};
\node (F) at (1.25,0.75) {$\circlearrowleft \KK^*$};
\node (G) at (-1.5,1.5) {$T$};
\draw[->] (B) -- (C);
\draw[right hook->] (A) -- (B);
\draw[->] (D) to[bend left=20] (C);
\draw[->] (E) to[bend left=20] (B);
\draw[->] (G) to[bend right=20] (A);
\node[left] at (0,0.5){$p$};
\node[above] at (-0.5,1){$j$};
\end{tikzpicture}
\end{center}
where $S$ is a proper DM stack, $X$ is a DM stack over $S$, the substack $Y$ is a local complete intersection in $X$ and is proper over $S$, and $V, E$, and $T$ are locally free sheaves (vector bundles) over the DM stacks $X, S$, and $Y$.
Moreover, we have an action of the multiplicative group $\KK^*$ on the fibers of $p$; precisely an action on $X$ and on $S$ such that the action on $S$ is trivial and the projection morphism $p \colon X \to S$ is equivariant.
We assume the closed substack $Y$ to be $\KK^*$-invariant and the vector bundles $V, E$, and $T$ to be $\KK^*$-equivariant.
Furthermore, we assume the $\KK^*$-fixed locus of $X$ to be a closed substack of $Y$ that we denote $Y_F$.

We also consider four global sections
$$\alpha, \alpha' \in H^0(X,V^\vee) ~ , ~~ \alpha'' \in H^0(X,p^*E) ~, ~~ \textrm{and} ~~ \beta \in H^0(X,V)$$
such that $\alpha, \alpha''$, and $\beta$ are $\KK^*$-equivariant but $\alpha'$ is not, and that
$$\beta(\alpha) = \beta(\alpha') = 0.$$
For every $t_1,t_2 \in \KK$, we define global sections
$$\alpha(t_1) = \alpha + t_1 \alpha' \in H^0(X,V^\vee) \quad \textrm{and} \quad \widetilde{\alpha}(t_1,t_2) = \alpha + t_1 \alpha' + t_2 \alpha'' \in H^0(X,V^\vee \oplus p^*E)$$
and Koszul two-periodic complexes
$$K(t_1) = \left(\Lambda^\bullet (V^\vee), \delta(t_1) \right) \quad \textrm{and} \quad \widetilde{K}(t_1,t_2) = \left(\Lambda^\bullet (V^\vee \oplus p^* E), \widetilde{\delta}(t_1, t_2) \right)$$
over the DM stack $X$, where the maps are
$$\delta(t_1) = \alpha(t_1) \wedge \cdot + \beta(\cdot) \quad \textrm{and} \quad \widetilde{\delta}(t_1, t_2) =\widetilde{\alpha}(t_1,t_2) \wedge \cdot + \beta(\cdot).$$
We observe the following
\begin{eqnarray*}
K(t_1) \textrm{ is $\KK^*$-equivariant } & \iff & t_1 = 0, \\
\widetilde{K}(t_1,t_2) \textrm{ is $\KK^*$-equivariant } & \iff & t_1 = 0,
\end{eqnarray*}
and we have the equality of two-periodic complexes
\begin{equation}\label{augmentedcomplex}
\widetilde{K}(t_1,0) = K(t_1) \otimes \Lambda^\bullet \(p^*E\).
\end{equation}
Furthermore, we assume
\begin{eqnarray*}
K(t_1) \textrm{ is strictly exact off $Y$ } & \iff & t_1 \neq 0, \\
\widetilde{K}(t_1,t_2) \textrm{ is strictly exact off $Y$ } & \iff & (t_1,t_2) \neq 0.
\end{eqnarray*}
We recall from \cite{PV algebraic construction} that a two-periodic complex is strictly exact off $Y$ if it is exact off $Y$ and the images of the maps are subbundles.

As a consequence, we get localized Chern characters
\begin{eqnarray*}
\forall ~ t_1 \neq 0 ~,~~ \Ch^X_Y(K(t_1)) & \in & A(Y \to X)_\QQ, \\
\forall ~ (t_1,t_2) \neq 0 ~,~~ \Ch^X_Y(\widetilde{K}(t_1,t_2)) & \in & A(Y \to X)_\QQ
\end{eqnarray*}
in the bivariant Chow rings.
For their constructions, we refer to \cite{Fulton} for complexes and to \cite{PV algebraic construction} for two-periodic complexes.

\begin{dfn}
We call virtual class for the above input data the Chow class\footnote{In this formula, we recall that the projection $p \circ j \colon Y \to S$ is proper.
}
\begin{equation}\label{Chowclass}
c_\vir := (p \circ j)_*\left(\Td(T) \cup \Ch^X_Y(K(1))[X] \right) \in A_*(S).
\end{equation}
We call Hodge virtual class the following Chow class
\begin{equation}\label{def Hodge integral}
c_\mathrm{top}(E) \cdot c_\vir \in A_*(S),
\end{equation}
where $c_\mathrm{top}$ stands for the top Chern class of a vector bundle.
Hodge integrals refer to integrals of the Hodge virtual class against other Chow classes on the space $S$.
\end{dfn}

\begin{rem}
The reason for the name ``virtual class'' is explained in Section \ref{section2}.
Indeed, once we choose appropriate input data, we prove it corresponds to the virtual class from the quantum singularity theory or even from Gromov--Witten theory of hypersurfaces or more general GLSM, see \cite[Section 6]{Guere8}.
Moreover, the vector bundle $E$ plays the role of the Hodge bundle over the moduli space of stable curves, hence the second definition.
\end{rem}

\begin{rem}
It is a huge open challenge to express the virtual class $c_\vir$ in a simple way. However, the Hodge virtual class can be expressed in terms of the fixed loci of the $\KK^*$-action, see Theorem \ref{loc Hodge}, and thus it is often computable.
We also recall that Hodge integrals play an important role in the definition of the Double Ramification hierarchy, see \cite{Buryak DR}.
\end{rem}

\begin{pro}\label{eq Hodge}
We have an equality
$$c_\mathrm{top}(E) \cdot c_\vir = (-1)^{\rk(E)} (p \circ j)_*\left(\Td(T \oplus j^*p^*E^\vee) \cup \Ch^X_Y(\widetilde{K}(0,1))[X] \right)$$
in the Chow ring of $S$, where $\rk$ stands for the rank of a vector bundle.
\end{pro}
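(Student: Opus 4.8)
\medskip
\noindent\textbf{Proof plan.}
The plan is to rewrite both sides as $(p\circ j)_*$ of one and the same class on $X$ built from $\Td(T)$ and $\Ch^X_Y(K(1))$. Three ingredients will be used: deformation invariance of localized Chern characters, the factorization \eqref{augmentedcomplex}, and multiplicativity of the localized Chern character under $\otimes$ with a $\ZZ/2$-graded bundle equipped with the zero differential.

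First I would show $\Ch^X_Y(\widetilde K(1,0)) = \Ch^X_Y(\widetilde K(0,1))$. For this, consider over $X\times\mathbb{A}^1$ the two-periodic Koszul complex with underlying bundle $\Lambda^\bullet(V^\vee\oplus p^*E)$ and differential $(\alpha+(1-s)\alpha'+s\alpha'')\wedge\cdot + \beta(\cdot)$, where $s$ is the coordinate on $\mathbb{A}^1$; it squares to zero since $\beta(\alpha+(1-s)\alpha')=0$, and its fiber over $s$ is $\widetilde K(1-s,s)$. Because $(1-s,s)\neq(0,0)$ for every $s$, each fiber is strictly exact off $Y$, hence the family is strictly exact off $Y\times\mathbb{A}^1$ and carries a localized Chern character in $A^*(Y\times\mathbb{A}^1\to X\times\mathbb{A}^1)$. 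Pulling back along the sections $s=0$ and $s=1$ gives $\Ch^X_Y(\widetilde K(1,0))$ and $\Ch^X_Y(\widetilde K(0,1))$, which coincide under the homotopy isomorphism $A^*(Y\times\mathbb{A}^1\to X\times\mathbb{A}^1)\cong A^*(Y\to X)$ (see \cite{Fulton} for complexes and \cite{PV algebraic construction} for two-periodic complexes).

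Next, \eqref{augmentedcomplex} gives $\widetilde K(1,0)=K(1)\otimes(\Lambda^\bullet(p^*E),0)$, the second factor being the $\ZZ/2$-graded bundle $\Lambda^{\mathrm{ev}}(p^*E)\oplus\Lambda^{\mathrm{odd}}(p^*E)$ with zero differential. Multiplicativity of the localized Chern character — additivity in the second factor, with a sign $-1$ for the odd shift — then yields
$$\Ch^X_Y(\widetilde K(1,0)) = \ch\!\big(\textstyle\sum_i(-1)^i\Lambda^i(p^*E)\big)\cup\Ch^X_Y(K(1)).$$
Applying the standard identity $\ch(\sum_i(-1)^i\Lambda^i G)=c_\mathrm{top}(G^\vee)\cup\Td(G^\vee)^{-1}$ to $G=p^*E$, together with $c_\mathrm{top}(p^*E^\vee)=(-1)^{\rk E}c_\mathrm{top}(p^*E)$, this becomes $(-1)^{\rk E}\,c_\mathrm{top}(p^*E)\cup\Td(p^*E^\vee)^{-1}\cup\Ch^X_Y(K(1))$.

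Finally I would substitute this into the right-hand side. Writing $\Td(T\oplus j^*p^*E^\vee)=\Td(T)\cup j^*\Td(p^*E^\vee)$, the factor $j^*\Td(p^*E^\vee)$ cancels $j^*\Td(p^*E^\vee)^{-1}$ and the two signs $(-1)^{\rk E}$ cancel, so the right-hand side collapses to $(p\circ j)_*\big(j^*p^*c_\mathrm{top}(E)\cup\Td(T)\cup\Ch^X_Y(K(1))[X]\big)$, which by the projection formula (using that $p\circ j$ is proper, and $p^*c_\mathrm{top}(E)=c_\mathrm{top}(p^*E)$) equals $c_\mathrm{top}(E)\cdot c_\vir$. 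I expect the deformation-invariance step to be the main obstacle: one must argue strict exactness off $Y$ for the whole $\mathbb{A}^1$-family — images being subbundles, not merely fiberwise exactness — which works here precisely because the parameter line avoids the origin; the only other point requiring care is the sign convention for shifts of two-periodic complexes entering the multiplicativity formula.
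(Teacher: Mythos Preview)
Your proposal is correct and follows essentially the same route as the paper's proof: homotopy invariance links $\widetilde K(0,1)$ to $\widetilde K(1,0)$, the factorization \eqref{augmentedcomplex} splits off $\Lambda^\bullet(p^*E)$, and the identity $\Ch(\lambda_{-1}W^\vee)\,\Td(W)=c_{\mathrm{top}}(W)$ together with $c_{\mathrm{top}}(W^\vee)=(-1)^{\rk W}c_{\mathrm{top}}(W)$ finishes. The paper reaches $\widetilde K(t_1,0)$ via an intermediate point $(t_1,t_2)$ with both coordinates nonzero rather than along your line $(1-s,s)$, but this is a cosmetic difference.
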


\begin{proof}
Using invariance of the localized Chern character by homotopy and equation \eqref{augmentedcomplex}, we see that
\begin{eqnarray*}
\forall t_1,t_2 \neq 0 ~,~~ \Ch^X_Y(\widetilde{K}(0,1)) & = & \Ch^X_Y(\widetilde{K}(t_1,t_2)) \\
& = & \Ch^X_Y(\widetilde{K}(t_1,0)) \\
& = & \Ch(\lambda_{-1} (p^*E)) \cdot \Ch^X_Y(K(t_1)) \\
& = & \Ch(\lambda_{-1} (p^*E)) \cdot \Ch^X_Y(K(1)),
\end{eqnarray*}
where the lambda-class is defined in K-theory on a vector bundle $W$ by
$$\lambda_t (W) = \sum_{q \geq 0} \Lambda^q W ~ t^q \in K^0[t].$$
Therefore, we obtain the desired equality after using the classical formulae \newline $\Ch(\lambda_{-1}(W^\vee)) \Td(W) = c_\mathrm{top}(W)$ and $c_\mathrm{top}(W^\vee) = (-1)^{\rk(W)} c_\mathrm{top}(W).$
\end{proof}

\subsection{Localization formula}
We recall all input data, except the global section $\alpha'\in H^0(X,V^\vee)$, are $\KK^*$-equivariant.
We denote by $q$ the equivariant parameter, by $A_*^{\KK^*}(\cdot)$ the equivariant Chow ring, and by $A_*^{\KK^*}(\cdot)_q$ the ring obtained from it by inverting the equivariant parameter $q$, see \cite{Edidin} for a detailed construction of the equivariant Chow ring.

\begin{pro}[{\cite[Thm 1]{Edidin2}}]\label{isom}
We have an isomorphism of groups
$$A_*^{\KK^*}(Y_F)_q \simeq A_*^{\KK^*}(Y)_q \simeq A_*^{\KK^*}(X)_q,$$
given by the equivariant pushforward of embeddings.
\end{pro}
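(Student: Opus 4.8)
The statement is the Atiyah--Bott localization theorem, transported into the equivariant intersection theory of Deligne--Mumford stacks (with $\QQ$-coefficients, as for the bivariant groups above); for algebraic spaces it is precisely \cite[Thm 1]{Edidin2}, and I would simply transfer that proof, so let me recall its skeleton. Write $\iota\colon Y_F\hookrightarrow Y$ and recall $j\colon Y\hookrightarrow X$; since $Y_F=X^{\KK^*}$ lies in $Y$ we also have $Y_F=Y^{\KK^*}$, and the open substacks $X\setminus Y_F$, $Y\setminus Y_F$, $X\setminus Y$ all carry fixed-point-free $\KK^*$-actions. The plan is to establish two facts — \emph{(i)} $A_*^{\KK^*}(W)_q=0$ for every Deligne--Mumford stack $W$ with fixed-point-free $\KK^*$-action, and \emph{(ii)} injectivity of the composite $(j\circ\iota)_*\colon A_*^{\KK^*}(Y_F)_q\to A_*^{\KK^*}(X)_q$ — and then conclude by a short diagram chase. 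Indeed, granting \emph{(i)}, the right-exact localization sequences
\[
A_*^{\KK^*}(Y_F)\to A_*^{\KK^*}(Y)\to A_*^{\KK^*}(Y\setminus Y_F)\to 0,\qquad A_*^{\KK^*}(Y)\to A_*^{\KK^*}(X)\to A_*^{\KK^*}(X\setminus Y)\to 0
\]
stay exact after the (flat) localization at $q$ and their rightmost terms vanish, so $\iota_*$ and $j_*$ are surjective after inverting $q$; then \emph{(ii)} forces $\iota_*$ injective, hence bijective, hence $j_*=(j\circ\iota)_*\circ\iota_*^{-1}$ bijective as well, which gives the two asserted isomorphisms, induced by the equivariant pushforwards.

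For \emph{(i)} I would argue as follows. A proper subgroup of $\KK^*$ is finite, so a fixed-point-free $\KK^*$-action has only finite stabilizers; thus $[W/\KK^*]$ is a finite-type Deligne--Mumford stack of dimension $\dim W-1$. Under the standard identification of $\KK^*$-equivariant Chow with the Chow groups of this quotient stack (up to a fixed degree shift), $A_*^{\KK^*}(W)$ is then supported in a window of $\dim W$ consecutive homological degrees. Since multiplication by the equivariant parameter $q$ drops homological degree by one, $q^{\dim W}$ annihilates $A_*^{\KK^*}(W)$, so the localization at $q$ vanishes; applying this with $W=Y\setminus Y_F$ and $W=X\setminus Y$ feeds the argument above.

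For \emph{(ii)}: when $X$ is smooth, $Y_F$ is smooth and regularly embedded, and the self-intersection formula gives $(j\circ\iota)^!\circ(j\circ\iota)_*=\,\cap\, c_\mathrm{top}^{\KK^*}(N)$ with $N=N_{Y_F/X}=\bigoplus_{w\neq 0}N_w$ the normal bundle, all $\KK^*$-weights $w$ being nonzero since $Y_F$ is exactly the fixed locus. Thus $c_\mathrm{top}^{\KK^*}(N)=\big(\textstyle\prod_w w^{\,\rk N_w}\big)\,q^{\rk N}+(\text{lower order in }q)$ has a nonzero rational leading coefficient and nilpotent remainder, hence is a unit in $A_*^{\KK^*}(Y_F)_q$, forcing $(j\circ\iota)_*$ injective there. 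For $X$ possibly singular one replaces $(j\circ\iota)^!$ by the Gysin-type operator built from the deformation to the normal cone of $Y_F$ in $X$ and runs the same computation, as in \cite[Thm 1]{Edidin2}. I expect this last point — injectivity without a smoothness hypothesis on $X$ — to be the main obstacle; the remaining work, namely checking that the boundedness of Chow groups of the quotient, the identification of equivariant Chow with Chow of the quotient stack, and this normal-cone input all survive the passage from algebraic spaces to Deligne--Mumford stacks, is routine over $\QQ$, where the finite gerbes appearing in the quotients contribute nothing.
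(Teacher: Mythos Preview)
The paper does not supply its own proof of this proposition: it is stated as a citation of \cite[Thm~1]{Edidin2} and used as a black box, so there is nothing to compare your argument against within the paper itself. Your sketch is a faithful outline of the Edidin--Graham proof (excision plus vanishing of $A_*^{\KK^*}$ on the fixed-point-free locus for surjectivity, self-intersection/normal-cone for injectivity), correctly flagging the passage to DM stacks and singular $X$ as the points needing care; this is exactly the approach the cited reference takes, so your proposal is appropriate.
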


Since the substack $Y$ is a complete intersection inside the stack $X$, we have an explicit description of the second isomorphism above, yielding a localization formula as the one proved by Atiyah--Bott \cite{Atiyah} in equivariant cohomology.

\begin{thm}[Localization formula]
Denote by $N_j=N_{Y\subset X}$ the normal bundle of the local complete intersection, it is a $\KK^*$-equivariant vector bundle over $Y$.
We have the following formula
$$[X] = j_*\(\cfrac{[Y]}{c^{\KK^*}_\mathrm{top}(N_j)}\) \in A^{\KK^*}_*(X)_q,$$
where $c^{\KK^*}_\mathrm{top}$ is the $\KK^*$-equivariant\footnote{We put the upper-script $\KK^*$ to emphasize that it is the equivariant top Chern class, even though it should be clear from the context. In particular, the localized Chern characters in the proof below are also $\KK^*$-equivariant, although we do not write the upper-script $\KK^*$.} top Chern class.
Furthermore, if the closed immersion $\iota_F \colon Y_F \hookrightarrow Y$ of the fixed locus inside $Y$ is also a local complete intersection, we have the same result replacing $Y$ by $Y_F$, $j$ by $j \circ \iota_F$, and the normal bundle $N_j$ by $N_{j \circ \iota_F}$.
\end{thm}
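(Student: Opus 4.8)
The plan is to deduce the formula from Proposition \ref{isom} and the excess intersection formula, the local complete intersection hypothesis being used only to make the composition $j^{*}j_{*}$ explicit. By Proposition \ref{isom}, the equivariant pushforward $j_{*}\colon A^{\KK^{*}}_{*}(Y)_{q}\to A^{\KK^{*}}_{*}(X)_{q}$ is an isomorphism, so there is a unique $\gamma\in A^{\KK^{*}}_{*}(Y)_{q}$ with $j_{*}\gamma=[X]$, and it is enough to show $\gamma=[Y]/c^{\KK^{*}}_{\mathrm{top}}(N_{j})$. Since $Y$ is a local complete intersection in $X$, the immersion $j$ is a regular embedding with normal bundle $N_{j}$; deformation to the normal cone is $\KK^{*}$-equivariant, so the equivariant self-intersection formula of \cite{Fulton}, in the refinement of \cite{Edidin}, gives $j^{*}j_{*}(-)=c^{\KK^{*}}_{\mathrm{top}}(N_{j})\cap(-)$ on $A^{\KK^{*}}_{*}(Y)_{q}$, while, $j$ being of the expected codimension, the refined Gysin pullback of the fundamental class is $j^{*}[X]=[Y]$. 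Applying $j^{*}$ to $[X]=j_{*}\gamma$ then gives $[Y]=c^{\KK^{*}}_{\mathrm{top}}(N_{j})\cap\gamma$.

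The next step is to show that $c^{\KK^{*}}_{\mathrm{top}}(N_{j})$ acts invertibly on $A^{\KK^{*}}_{*}(Y)_{q}$; this is where the hypothesis $X^{\KK^{*}}=Y_{F}\subseteq Y$ is genuinely needed. The point is that $N_{j}$ carries no weight-zero part after restriction to the fixed locus: a $\KK^{*}$-fixed normal direction to $Y$ at a point of $Y_{F}$ would produce fixed points of $X$ lying off $Y$, contradicting $X^{\KK^{*}}\subseteq Y$. Hence $\iota_{F}^{*}c^{\KK^{*}}_{\mathrm{top}}(N_{j})=c^{\KK^{*}}_{\mathrm{top}}(N_{j}|_{Y_{F}})$ is a unit after inverting $q$, being a product of equivariant Euler classes of bundles with nonzero weights over $Y_{F}$. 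I would then invoke the other isomorphism of Proposition \ref{isom}, namely $\iota_{F*}\colon A^{\KK^{*}}_{*}(Y_{F})_{q}\xrightarrow{\sim}A^{\KK^{*}}_{*}(Y)_{q}$, together with the projection formula $c^{\KK^{*}}_{\mathrm{top}}(N_{j})\cap\iota_{F*}(\xi)=\iota_{F*}\!\left(\iota_{F}^{*}c^{\KK^{*}}_{\mathrm{top}}(N_{j})\cap\xi\right)$, to conclude that multiplication by $c^{\KK^{*}}_{\mathrm{top}}(N_{j})$ on $A^{\KK^{*}}_{*}(Y)_{q}$ is conjugate, via $\iota_{F*}$, to multiplication by a unit, hence is invertible. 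This gives $\gamma=[Y]/c^{\KK^{*}}_{\mathrm{top}}(N_{j})$, the first formula.

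Finally, I would obtain the last assertion by iterating. If $\iota_{F}$ is also a local complete intersection then $j\circ\iota_{F}$ is a regular embedding, and the normal bundle exact sequence $0\to N_{\iota_{F}}\to N_{j\circ\iota_{F}}\to \iota_{F}^{*}N_{j}\to 0$ gives, by the Whitney formula, $c^{\KK^{*}}_{\mathrm{top}}(N_{j\circ\iota_{F}})=c^{\KK^{*}}_{\mathrm{top}}(N_{\iota_{F}})\cup\iota_{F}^{*}c^{\KK^{*}}_{\mathrm{top}}(N_{j})$. Applying the formula already proven to both $j$ and $\iota_{F}$, substituting $[Y]=\iota_{F*}\!\left([Y_{F}]/c^{\KK^{*}}_{\mathrm{top}}(N_{\iota_{F}})\right)$ into $[X]=j_{*}\!\left([Y]/c^{\KK^{*}}_{\mathrm{top}}(N_{j})\right)$, and moving the inverse equivariant Euler class of $N_{j}$ through $\iota_{F*}$ by the projection formula, one gets $[X]=(j\circ\iota_{F})_{*}\!\left([Y_{F}]/c^{\KK^{*}}_{\mathrm{top}}(N_{j\circ\iota_{F}})\right)$, as wanted.

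I expect the main obstacle to be precisely the invertibility of $c^{\KK^{*}}_{\mathrm{top}}(N_{j})$: checking carefully that $N_{j}$ becomes purely moving after restriction to $Y_{F}$, and keeping the bookkeeping straight when $Y$ (and possibly $X$) is not smooth, so that one must argue with the action of the class $c^{\KK^{*}}_{\mathrm{top}}(N_{j})$ on the group $A^{\KK^{*}}_{*}(Y)_{q}$ rather than inside a Chow \emph{ring}. Everything else is a formal consequence of the bivariant/Gysin calculus of \cite{Fulton} and its equivariant version \cite{Edidin,Edidin2}.
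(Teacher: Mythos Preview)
Your argument is correct and follows essentially the same line as the paper's proof: use the surjectivity of $j_*$ from Proposition~\ref{isom} to write $[X]=j_*(a)$, apply $j^*$ and the self-intersection formula $j^*j_*(a)=c^{\KK^*}_{\mathrm{top}}(N_j)\cap a$, then divide by the equivariant top Chern class. You are in fact more careful than the paper on two points---you justify the invertibility of $c^{\KK^*}_{\mathrm{top}}(N_j)$ via the hypothesis $X^{\KK^*}\subset Y$ (the paper's footnote invokes only $\KK^*$-invariance of $Y$, which by itself is not enough), and you spell out the iteration to $Y_F$ via the Whitney formula, whereas the paper just asserts the same argument goes through.
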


\begin{proof}
By the surjectivity of the second map in Proposition \ref{isom}, there exists an equivariant class $a \in A_*^{\KK^*}(Y)_q$ such that
$$[X] = j_*(a) \in A_*^{\KK^*}(X)_q.$$
Therefore, we have
$$[Y] = j^*[X] = j^*j_*(a) = c^{\KK^*}_\mathrm{top}(N_j) \cup a,$$
where in the last equality we use that $j$ is a local complete intersection morphism.
Therefore, dividing\footnote{Since the substack $Y \subset X$ is $\KK^*$-invariant, its normal bundle has no fixed part under $\KK^*$ and its equivariant top Chern class is then invertible.} both sides by $c^{\KK^*}_\mathrm{top}(N_j)$, we get the result.
\end{proof}

Applying the localization formula to the right-hand side of the equality in Proposition \ref{eq Hodge}, we obtain a simple formula for the Hodge virtual class.

\begin{thm}\label{loc Hodge}
The Hodge virtual class equals
$$c_\mathrm{top}(E^\vee) \cdot c_\vir = \lim_{q\to 0} c^{\KK^*}_\mathrm{top}(E^\vee) \cdot (p \circ j)_*\(\cfrac{c^{\KK^*}_\mathrm{top}(j^*V)}{c^{\KK^*}_\mathrm{top}(N_j)} \cdot \cfrac{\Td(j^*T)}{\Td(j^*V)}\),$$
where we recall the map $p \circ j$ is proper.
On the right-hand side, the Todd, Chern, and top Chern classes are all taken $\KK^*$-equivariantly.
\end{thm}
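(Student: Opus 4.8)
The plan is to start from the equivariant reformulation of the Hodge virtual class in Proposition \ref{eq Hodge}, pass to the equivariant Chow ring, apply the localization formula to the fundamental class $[X]$, and then take the non-equivariant limit $q \to 0$. Concretely, in $A_*^{\KK^*}(S)$ the right-hand side of Proposition \ref{eq Hodge} reads $(-1)^{\rk(E)} (p\circ j)_*\big(\Td(T \oplus j^*p^*E^\vee) \cup \Ch^X_Y(\widetilde K(0,1))[X]\big)$. Since all the data entering $\widetilde K(0,1)$ (that is $\alpha$, $\alpha''$, $\beta$) are $\KK^*$-equivariant, the complex $\widetilde K(0,1)$ is equivariant, its localized Chern character $\Ch^X_Y(\widetilde K(0,1))$ lives in the equivariant bivariant group, and the whole expression makes sense equivariantly; the non-equivariant class is recovered by specializing $q \mapsto 0$, which is exactly the $\lim_{q\to 0}$ in the statement.

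Next I would substitute the localization formula $[X] = j_*\big([Y]/c^{\KK^*}_\mathrm{top}(N_j)\big)$ into this expression. Using the projection formula for the bivariant action of $\Ch^X_Y(\widetilde K(0,1)) \in A(Y\to X)_\QQ$ on $j_*[Y]$, together with $j^* \circ j_* = c^{\KK^*}_\mathrm{top}(N_j) \cup (\cdot)$, the capping against $[X]$ collapses to a computation on $Y$: one gets $(p\circ j)_*\big(\Td(j^*T \oplus j^*p^*E^\vee) \cup j^*\Ch^X_Y(\widetilde K(0,1))\,/\,c^{\KK^*}_\mathrm{top}(N_j)\big)$, up to the sign $(-1)^{\rk E}$. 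The key remaining point is to identify $j^*\Ch^X_Y(\widetilde K(0,1))$, i.e. the restriction of the localized Chern character of the Koszul complex to its support $Y$. Here I would invoke the standard self-intersection / excess formula for localized Chern characters of Koszul complexes built from a regular section cutting out $Y$: the restriction to $Y$ of $\Ch^X_Y$ of the Koszul complex on $V^\vee \oplus p^*E$ equals $\Ch(\lambda_{-1}(j^*V \oplus j^*p^*E))\,/\,c^{\KK^*}_\mathrm{top}(N_j)$ — no wait, more precisely one uses that $j^*\Ch^X_Y(K) = \Ch(\lambda_{-1}(\text{conormal-type bundle}))$; for the Koszul complex associated to the section whose zero locus is exactly $Y$, this gives $j^*\Ch^X_Y(\widetilde K(0,1)) = \Ch(\lambda_{-1}(j^*V^\vee \oplus j^*p^*E^\vee))$.

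Then I would combine the pieces. Using $\Ch(\lambda_{-1}(W^\vee))\,\Td(W) = c^{\KK^*}_\mathrm{top}(W)$ applied twice — once to $W = j^*V$ and once to $W = j^*p^*E$ — the factor $\Ch(\lambda_{-1}(j^*V^\vee))\,\Td(j^*V)^{-1}$ is not directly of that form, so I would instead rearrange: write $\Td(j^*T \oplus j^*p^*E^\vee) = \Td(j^*T)\,\Td(j^*p^*E^\vee)$, and multiply and divide by $\Td(j^*V)$. The term $\Ch(\lambda_{-1}(j^*V^\vee))\,\Td(j^*V) = c^{\KK^*}_\mathrm{top}(j^*V)$ pops out, leaving $\Td(j^*T)/\Td(j^*V)$ as a leftover factor, and $\Ch(\lambda_{-1}(j^*p^*E^\vee))\,\Td(j^*p^*E^\vee) = c^{\KK^*}_\mathrm{top}(j^*p^*E^\vee) = (-1)^{\rk E} c^{\KK^*}_\mathrm{top}(j^*p^*E) = (-1)^{\rk E} c^{\KK^*}_\mathrm{top}(E^\vee)^\vee\cdots$; I would simply push the $c^{\KK^*}_\mathrm{top}(E^\vee)$ factor outside the pushforward (it is pulled back from $S$) and let the sign $(-1)^{\rk E}$ cancel the sign from Proposition \ref{eq Hodge}. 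What remains inside $(p\circ j)_*$ is precisely $\dfrac{c^{\KK^*}_\mathrm{top}(j^*V)}{c^{\KK^*}_\mathrm{top}(N_j)}\cdot\dfrac{\Td(j^*T)}{\Td(j^*V)}$, and taking $\lim_{q\to 0}$ yields the claimed identity.

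The main obstacle I anticipate is the bookkeeping at the level of localized Chern characters: making the restriction formula $j^*\Ch^X_Y(\widetilde K(0,1)) = \Ch(\lambda_{-1}(j^*V^\vee \oplus j^*p^*E^\vee))$ rigorous requires that $\widetilde K(0,1)$ really is (homotopic to) the Koszul complex of a section of $V \oplus p^*E$ whose vanishing locus is scheme-theoretically $Y$ with the expected normal bundle, i.e. that the relevant section is regular along $Y$ — this is where the hypothesis that $Y$ is a local complete intersection and the strict exactness off $Y$ enter, and one must check the equivariant enhancement of the self-intersection formula for two-periodic complexes in the sense of \cite{PV algebraic construction}. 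The rest — the projection formula, the Todd/Chern identities, and the harmless sign chase — is routine.
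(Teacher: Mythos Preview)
Your proposal follows essentially the same route as the paper: start from Proposition \ref{eq Hodge}, lift to the equivariant Chow ring, substitute the localization formula $[X]=j_*\big([Y]/c^{\KK^*}_\mathrm{top}(N_j)\big)$, collapse to $Y$, identify the Chern character there, massage with the identity $\Ch(\lambda_{-1}(W^\vee))\Td(W)=c_\mathrm{top}(W)$, push forward, and take $q\to 0$.

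Two small points. First, the ``main obstacle'' you anticipate is not one: after the bivariant compatibility $\Ch^X_Y(\widetilde K(0,1))\circ j_* = \Ch^Y_Y(j^*\widetilde K(0,1))$, the class $\Ch^Y_Y$ is simply the \emph{ordinary} Chern character of the K-theory class of the pulled-back complex, and the K-class of $\Lambda^\bullet W$ with \emph{any} differential is $\lambda_{-1}(W)$. No regularity of the section along $Y$ and no self-intersection/excess argument is needed --- the paper records this as the one-line K-theoretic equality $j^*\widetilde K(0,1)=\lambda_{-1}(j^*V^\vee\oplus j^*p^*E)$. Second, note that the exterior algebra is taken on $V^\vee\oplus p^*E$, so the restriction gives $\lambda_{-1}(j^*V^\vee\oplus j^*p^*E)$ (no dual on $E$); pairing $\Ch(\lambda_{-1}(j^*p^*E))$ with the factor $\Td(j^*p^*E^\vee)$ coming from Proposition \ref{eq Hodge} then yields $c^{\KK^*}_\mathrm{top}(E^\vee)$ directly, and the sign $(-1)^{\rk E}$ is absorbed by passing from $c_\mathrm{top}(E)\cdot c_\vir$ on the left of Proposition \ref{eq Hodge} to $c_\mathrm{top}(E^\vee)\cdot c_\vir$ in the theorem. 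Your sign chase had a stray dual, but as you said, it is harmless once straightened out.
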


\begin{rem}
The Chow class on the right-hand side lives in the equivariant Chow ring $A^{\KK^*}_*(S)_q \simeq A_*(S)(\!(q)\!)$, as the action on $S$ is trivial.
Precisely, Theorem \ref{loc Hodge} means that it contains no negative powers of $q$ and that the constant term in $q$, that we interpret as a limit $q\to 0$, equals the left-hand side.
\end{rem}

\begin{proof}
By Proposition \ref{eq Hodge}, we rewrite the Hodge virtual class in terms of the localized Chern character of the two-periodic complex $\widetilde{K}(0,1)$. Since it is $\KK^*$-equivariant, we use the localization formula to get
\begin{eqnarray*}
\Ch^X_Y(\widetilde{K}(0,1))[X] & = & \Ch^X_Y(\widetilde{K}(0,1)) \left[ j_*\(\cfrac{[Y]}{c^{\KK^*}_\mathrm{top}(N_j)}\)\right] \\
& = & \Ch^{Y}_{Y}(j^*\widetilde{K}(0,1)) \left[ \cfrac{[Y]}{c^{\KK^*}_\mathrm{top}(N_j)}\right] \\
& = & \cfrac{\Ch(j^*\widetilde{K}(0,1))}{c^{\KK^*}_\mathrm{top}(N_j)} \in A^{\KK^*}_*(Y)_q. \\
\end{eqnarray*}
Furthermore, we have a K-theoretic equality
$$j^*\widetilde{K}(0,1) = \lambda_{-1} (j^*V^\vee \oplus j^* p^* E) = \lambda_{-1} (j^*V^\vee) \otimes \lambda_{-1}(j^*p^* E).$$
Thus, we obtain
$$\Td(T \oplus j^*p^*E^\vee) \cdot \Ch^X_Y(\widetilde{K}(0,1))[X] = j^*p^*(c^{\KK^*}_\mathrm{top}(E^\vee)) \cdot \cfrac{\Td(j^*T)}{\Td(j^*V)} \cdot \cfrac{c^{\KK^*}_\mathrm{top}(j^*V)}{c^{\KK^*}_\mathrm{top}(N_j)}$$
in the ring $A^{\KK^*}_*(Y)_q$.
Eventually, we push-forward to the space $S$ and get
\begin{equation}\label{eq1}
(p \circ j)_*\left(\Td(T \oplus E^\vee) \cup \Ch^X_Y(\widetilde{K}(0,1))[X] \right) = c^{\KK^*}_\mathrm{top}(E^\vee) \cdot (p \circ j)_*\(\cfrac{\Td(j^*T)}{\Td(j^*V)} \cdot \cfrac{c^{\KK^*}_\mathrm{top}(j^*V)}{c^{\KK^*}_\mathrm{top}(N_j)}\).
\end{equation}
Notice that at this point, it is a $\KK^*$-equivariant equality, taking place in the ring $A^{\KK^*}_*(S)_q$, which equals the ring $A_*(S)(\!(q)\!)$ since the $\KK^*$-action on $S$ is trivial.
It is not clear that the right-hand side of equality \eqref{eq1} contains no negative powers of the equivariant parameter $q$, but it follows from  the fact the left-hand side has a non-equivariant limit $q \to 0$.

At last, once we take the non-equivariant limit, or equivalently the constant term in $q$, in equality \eqref{eq1}, the left-hand side becomes the Hodge virtual class by Proposition \ref{eq Hodge} and we obtain the desired equality.
\end{proof}

\section{Application to FJRW theory}\label{section2}
In this section, we work over the base field $\KK=\CC$ and we give an application of Theorem \ref{loc Hodge} to the quantum singularity (FJRW) theory, shedding new light on the results of \cite{Guere1,Guere2}.
We then use most of the same notations and refer to \cite{Guere1,Guere2} for more details.

Once for all, we fix a quasi-homogeneous polynomial $W$ of chain type
$$W(x_1,\dotsc,x_N) = x_1^{a_1}x_2 + \dotsb + x_{N-1}^{a_{N-1}}x_N+x_N^{a_N},$$
where integers $a_1, \dotsc, a_N$ are positive.
We denote by $d$ its degree and by $w_1, \dotsc, w_N$ the weights of the variables $x_1, \dotsc, x_N$.
We also fix two non-negative integers $g$ and $n$ such that $2g-2+n>0$, i.e.~the space of stable curves $\overline{\cM}_{g,n}$ is non-empty.
We further consider an admissible group $G$, which is a subgroup of the maximal group $\mathrm{Aut}(W)$ of (diagonal) symmetries of $W$, containing the grading element $\grj$, see \cite[Equation (3)]{Guere1}.

\subsection{Hodge virtual cycle map}
Let us consider the Landau--Ginzburg orbifold $(W,G)$ (see \cite[Definition 1.2]{Guere1}) and denote by $\st$ the state space of its FJRW theory.
It decomposes as
$$\st = \bigoplus_{\gamma \in G} \st_\gamma$$
where we recall that $\gamma$ is a diagonal matrix $\gamma = \mathrm{diag}(\gamma_1, \dotsc, \gamma_N)$ and that $\gamma$ is called broad if at least one of its entries equals $1$ and narrow otherwise.
In particular, for $\gamma$ narrow, we have $\st_\gamma \simeq \CC$.

For any $\overline{\gamma} = (\gamma(1), \dotsc,\gamma(n)) \in G^n$ satisfying the selection rule
$$\gamma(1) \cdot \dotsm \cdot \gamma(n) = \grj^{2g-2+n},$$
where $\grj \in G$ is the grading element, we have a moduli space of $(W,G)$-spin curves, that we denote by $\sS_{g,n}(\overline{\gamma})$, see \cite[Section 2]{FJRW1} or \cite[Section 1.2]{Guere2}.
We denote by $\sS_{g,n}$ the disjoint union over all possible $\overline{\gamma}$ and we have a finite map $o \colon \sS_{g,n} \to \overline{\cM}_{g,n}$.

The virtual cycle map\footnote{We use the construction of the virtual cycle map by Polishchuk and Vaintrob \cite{Polish1}.} is a linear map
$$c_\vir \colon \st^{\otimes n} \to A^*(\sS_{g,n})$$
such that for entries $u_1 \in \st_{\gamma(1)}, \dotsc, u_n \in \st_{\gamma(n)}$ we have
$$c_\vir(u_1, \dotsc, u_n) \in H^{2 \mathrm{degvir}}(\sS_{g,n}(\gamma(1), \dotsc, \gamma(n)),$$
where the integer $\mathrm{degvir}$ is explicitly determined by the genus $g$ and the matrices $\gamma(1), \dotsc, \gamma(n)$, see \cite[Equation (16)]{Guere1}.

The Hodge bundle $\mathbb{E}$ on the moduli space $\overline{\cM}_{g,n}$ is the rank-$g$ vector bundle given by $\pi_* \omega$, where $\pi$ is the map from the universal curve and $\omega$ is the relative dualizing sheaf. Its fiber on the point representing a curve $C$ is then $H^0(C,\omega_C)$.

\begin{dfn}
We call Hodge virtual cycle map the product
$$\lambda_g \cdot c_\vir \colon \st^{\otimes n} \to A^*(\sS_{g,n}),$$
where $\lambda_g := c_\mathrm{top}(\mathbb{E})$ is the top Chern class of the (pull-back of the) Hodge bundle. 
\end{dfn}

From now on, we fix entries $u_1 \in \st_{\gamma(1)}, \dotsc, u_n \in \st_{\gamma(n)}$ which are invariant under the maximal group $\mathrm{Aut}(W)$, see \cite[Section 1.3]{Guere2} for an explicit description.
In particular, we are given a subset $\rR_{\gamma(i)} \subset \left\lbrace x_1, \dotsc, x_N \right\rbrace$ of variables for each marking, see \cite[Definition 1.5]{Guere1}.


Before stating Theorem \ref{mainFJRW}, we introduce a few more notations.
Let us denote by $\pi \colon \cC \to \sS_{g,n}$ the universal curve over the moduli space of $(W,G)$-spin curves and by $\cL_1, \dotsc, \cL_N$ the universal line bundles.
In \cite[Equation (72)]{Guere1} or \cite[Equation (11)]{Guere2}, we define the modified line bundles $\cL^\rR_1, \dotsc, \cL^\rR_N$ on $\cC$, which are obtained from the universal line bundles by twisting down some of the markings.
We also need the following definition.

\begin{dfn}\label{equivEuler}
The equivariant Euler class of a vector bundle $V$ on a space $S$ is defined by
\begin{equation}\label{totChern}
e_q(V) = q^{\rk(V)} \cdot \(1+\cfrac{c_1(V)}{q}+\cfrac{c_2(V)}{q^2}+\dotsb+\cfrac{c_{\rk(V)}(V)}{q^{\rk(V)}}\),
\end{equation}
and extended multiplicatively to K-theory as a map $e_q \colon K^0(S) \to A_*(S)[ q,q^{-1} ]\!]$.
\end{dfn}

\begin{rem}
Let $\CC^*$ act trivially on a space $S$ and fiberwise on a vector bundle $V$. The $\CC^*$-equivariant top Chern class of $V$ equals the equivariant Euler class, i.e.~$c^{\CC^*}_\mathrm{top}(V)=e_q(V)$, where $q$ is the equivariant parameter.
\end{rem}

\begin{thm}\label{mainFJRW}
Under the previous assumptions and notations, we have
\begin{equation}\label{formFJRW}
\lambda_g \cdot c_\vir(u_1, \dotsc, u_n) = \lim_{q\to 0} e_{-q_{N+1}}(\mathbb{E}) \cdot \prod_{j=1}^N e_{q_j}(-R^\bullet\pi_*(\cL^\rR_j)),
\end{equation}
where $q_1:=q$ and $q_{j+1}:=(-a_1) \dotsm (-a_j) ~ q$ for $1 \leq j \leq N$.
\end{thm}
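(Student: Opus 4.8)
The plan is to specialize the abstract machinery of Section~\ref{section1} to the moduli space of $(W,G)$-spin curves and then apply Theorem~\ref{loc Hodge} verbatim. First I would set up the input data: take $S = \sS_{g,n}(\overline{\gamma})$, and for $X$ the total space (or an appropriate cone/stack) over $S$ on which Polishchuk--Vaintrob's construction of $c_\vir$ is realized as a localized Chern character of a Koszul-type two-periodic complex, following \cite[Section 6]{Guere8}. The vector bundle $V$ should be built from the derived pushforwards $R^\bullet\pi_*(\cL^\rR_j)$ of the modified line bundles (more precisely, $V$ is a two-term complex resolving them, with $V^\vee$ accounting for $H^1$ and $p^*E$ playing the role of the Hodge bundle $\mathbb{E} = R^\bullet\pi_*\omega$), and the sections $\alpha,\alpha',\alpha'',\beta$ encode the polynomial $W$ of chain type and its partial derivatives in the Koszul differential, exactly as in the recursive-complex picture of \cite{Guere1,Guere2}. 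The key point is to choose the $\KK^* = \CC^*$-action so that it acts on the fibers of the $\cL^\rR_j$'s with weights $q_1,\dots,q_N$ as in the statement (i.e.\ rescaling the $j$-th coordinate with weight $(-a_1)\cdots(-a_{j-1})q$), which is precisely the action making the chain relations $x_j^{a_j} \mapsto$ (next variable) equivariant; one must then check that $W$-quasi-homogeneity forces the fixed locus $Y_F$ to be a closed substack of $Y$, as required by the set-up, and that $\alpha'$ (the non-equivariant section) is the piece of the Koszul differential that fails to be weighted-homogeneous under this particular $\CC^*$.

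Next I would verify the hypotheses of Section~\ref{section1.1} in this setting: strict exactness off $Y$ of $K(t_1)$ for $t_1 \neq 0$ and of $\widetilde K(t_1,t_2)$ for $(t_1,t_2)\neq 0$, which is where the structure of chain polynomials enters (the vanishing loci of the partial derivatives of a chain $W$ are well understood, and this is essentially the content establishing that $c_\vir$ is well-defined in \cite{Polish1,Guere8}). I would also check that $Y \subset X$ and $Y_F \subset Y$ are local complete intersections so that the Localization formula (Theorem in Section~1.2) applies, and that $\Td(T)$ with the correct choice of $T$ reproduces the $\Td(j^*T)/\Td(j^*V)$ correction; in fact with the right bookkeeping $T$ and the relevant Todd factors should match the tangent/obstruction data so that Theorem~\ref{loc Hodge} directly yields
$$\lambda_g \cdot c_\vir(u_1,\dots,u_n) = \lim_{q\to 0} c^{\CC^*}_\mathrm{top}(\mathbb{E}^\vee) \cdot (p\circ j)_*\!\left(\frac{c^{\CC^*}_\mathrm{top}(j^*V)}{c^{\CC^*}_\mathrm{top}(N_j)} \cdot \frac{\Td(j^*T)}{\Td(j^*V)}\right).$$
Then I would translate the right-hand side into K-theoretic language: using $e_q(W) = c^{\CC^*}_\mathrm{top}(W)$ for the trivial-action-on-base case (the Remark after Definition~\ref{equivEuler}) and multiplicativity of $e_q$ over K-theory, the ratio $c^{\CC^*}_\mathrm{top}(j^*V)/c^{\CC^*}_\mathrm{top}(N_j)$ together with the Todd corrections collapses, via Grothendieck--Riemann--Roch along $\pi$, into $\prod_{j=1}^N e_{q_j}(-R^\bullet\pi_*(\cL^\rR_j))$, while $c^{\CC^*}_\mathrm{top}(\mathbb{E}^\vee) = e_{-q_{N+1}}(\mathbb{E})$ after identifying the dual with the weight-$(-q_{N+1})$ action and using $c_\mathrm{top}(W^\vee) = (-1)^{\rk W}c_\mathrm{top}(W)$ as in Proposition~\ref{eq Hodge}.

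The main obstacle I expect is the bookkeeping in the previous step: correctly matching the abstract bundles $V$, $T$, $E$, and the normal bundle $N_j$ with the concrete derived pushforwards $R^\bullet\pi_*(\cL^\rR_j)$ and $R^\bullet\pi_*\omega$ on the spin moduli space, tracking the $\CC^*$-weights $q_1,\dots,q_{N+1}$ through each term (including the sign twist by $-a_1\cdots -a_{j-1}$ coming from the chain structure), and checking that the Todd contributions from $T$ cancel against those from $V$ so that only the equivariant Euler classes survive. A secondary subtlety is justifying that the limit $q\to 0$ exists — but this is guaranteed abstractly by Theorem~\ref{loc Hodge} once the input data are shown to fit the framework, so the real work is the identification of the geometric data, not a separate convergence argument. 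I would organize the proof so that the heavy lifting of \cite{Guere1,Guere2,Guere8} is invoked to supply $X$, the sections, and the strict-exactness statements, leaving here only the verification of $\CC^*$-equivariance with the stated weights and the GRR computation producing formula~\eqref{formFJRW}.
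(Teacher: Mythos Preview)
Your proposal is correct and follows essentially the same route as the paper. The paper's proof makes the abstract input data completely explicit in a way that dissolves the two obstacles you anticipate: one takes two-term locally free resolutions $R\pi_*\cL^\rR_j = [A_j \to B_j]$, sets $X = \mathrm{Tot}(A_1 \oplus \dotsb \oplus A_N)$ with $Y = S$ embedded as the zero section, $V = p^*(B_1 \oplus \dotsb \oplus B_N)$, and $T = B_1 \oplus \dotsb \oplus B_N$; then $N_j = A_1 \oplus \dotsb \oplus A_N$ and $j^*V = T$ on the nose, so the Todd ratio $\Td(j^*T)/\Td(j^*V)$ is identically $1$ and the remaining ratio $c^{\CC^*}_\mathrm{top}(j^*V)/c^{\CC^*}_\mathrm{top}(N_j) = \prod_j e_{q_j}(B_j)/e_{q_j}(A_j) = \prod_j e_{q_j}(-R^\bullet\pi_*\cL^\rR_j)$ by multiplicativity alone---no Grothendieck--Riemann--Roch is needed. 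The non-equivariant section $\alpha'$ is precisely the piece $\widetilde{\alpha}''_N$ coming from the terminal monomial $x_N^{a_N}$ of the chain; all other monomials $x_j^{a_j}x_{j+1}$ are equivariant under the prescribed weights by design, and the identification of $c_\vir$ with the localized Chern character expression \eqref{Chowclass} is quoted from \cite[Lemma 5.3.8]{ChiodoJAG} rather than \cite{Guere8}.
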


\begin{proof}
First of all, we take resolutions of the higher push-forwards $R^\bullet\pi_*(\cL^\rR_j)$ for all $j$ by vector bundles over $\sS_{g,n}$
$$R\pi_*\cL^\rR_j = [A_j \to \widetilde{B}_j],$$
such that there exist the appropriate morphisms from \cite[Equation (12)]{Guere2}
\begin{equation}\label{widealpha}
\begin{array}{lcl}
\widetilde{\alpha}_j & \colon & \cO \rightarrow \cS^{a_{j-1}} A_{j-1}^\vee \otimes \widetilde{B}_j^\vee \oplus (\cS^{a_j-1} A_j^\vee \otimes A_{j+1}^\vee) \otimes \widetilde{B}_j^\vee, \\
\widetilde{\beta}_j & \colon & \widetilde{B}_j^\vee \rightarrow A_j^\vee,
\end{array}
\end{equation}
with the convention $(A_0,A_{N+1})=(0,A_N)$.

Moreover, we decompose the morphism $\widetilde{\alpha}_N$ into the sum $\widetilde{\alpha}'_N+\widetilde{\alpha}''_N$ where
$$\widetilde{\alpha}'_N \colon \cO \rightarrow \cS^{a_{N-1}} A_{N-1}^\vee \otimes \widetilde{B}_N^\vee$$
and we also consider the morphism from \cite[Equation (19)]{Guere2}
$$\widetilde{\alpha}_{N+1} \colon \cO \rightarrow \cS^{a_N} A_N^\vee \otimes \mathbb{E}.$$

We apply Theorem \ref{loc Hodge} to the following data:
\begin{itemize}
\item $S := \sS_{g,n}(\gamma(1), \dotsc, \gamma(n))$ is the moduli space of $(W,G)$-spin curves,
\item $E := \mathbb{E}$ is the Hodge bundle,
\item $X := \mathrm{Tot}\(A_1 \oplus \dotsb \oplus A_N\)$ is the total space of the vector bundle, with $p \colon X \to S$ the projection,
\item $Y := \sS_{g,n}(\gamma(1), \dotsc, \gamma(n))$ embedded in $X$ via the zero section $Y \hookrightarrow X$,
\item $V := p^*B_1 \oplus \dotsb \oplus p^*B_N$,
\item $T := B_1 \oplus \dotsb \oplus B_N$,
\item $\alpha := \widetilde{\alpha}_1 + \dotsb + \widetilde{\alpha}_{N-1} + \widetilde{\alpha}'_N$ viewed as a global section of $V^\vee$ on $X$,
\item $\alpha' := \widetilde{\alpha}''_N$ viewed as a global section of $V^\vee$ on $X$,
\item $\alpha'' := \widetilde{\alpha}_{N+1}$ viewed as a global section of $p^*E$ on $X$,
\item $\beta := \widetilde{\beta}_1 + \dotsb + \widetilde{\beta}_N$ viewed as a global section of $V$ on $X$.
\end{itemize}
It follows from \cite[Section 3.5]{Guere1} that $\beta(\alpha)=\beta(\alpha')=0$.
Furthermore, we take the following $\CC^*$-action:
\begin{itemize}
\item trivial on $S$,
\item scaling fibers with weight $1$ on $A_1$ and on $B_1$, i.e.~$\lambda \cdot v = \lambda ~ v$ on a vector $v$,
\item scaling fibers with weight $(-a_1) \dotsm (-a_j)$ on $A_{j+1}$ and on $B_{j+1}$, i.e.~$\lambda \cdot v = \lambda^{(-a_1) \dotsm (-a_j)} ~ v$ on a vector $v$,
\item scaling fibers with weight $(-a_1) \dotsm (-a_{N})$ on the Hodge bundle $\mathbb{E}$.
\end{itemize}
It is straightforward to see that the global sections $\alpha, \alpha''$, and $\beta$ are $\CC^*$-equivariant, and that $\alpha'$ is not.
Moreover, the $\CC^*$-fixed locus in $X$ is given by the constraint
$$\forall \lambda \in \CC^*~,~~\lambda \cdot (x_1, \dotsc, x_N) = (\lambda ~ x_1,\dotsc, \lambda^{(-a_1) \dotsm (-a_{N-1})} ~ x_N) = (x_1, \dotsc, x_N),$$
yielding $(x_1, \dotsc, x_N)=0$, i.e.~the $\CC^*$-fixed locus is $Y_F=Y=S$.

Following Section \ref{section1}, we form the two-periodic complexes $K(t_1)$ and $K(t_1,t_2)$ for $(t_1,t_2) \in \CC^2$, and they are $\CC^*$-equivariant when $t_1=0$.
Looking at the common vanishing locus of the global sections $\alpha, \alpha''$, and $\beta$, it follows that $K(t_1)$ is strictly exact when $t_1 \neq 0$ and $K(t_1,t_2)$ is strictly exact when $(t_1,t_2) \neq 0$.
As a consequence, all assumptions from Section \ref{section1} are fulfilled.

By definition of the virtual cycle map, the equality
\begin{equation}\label{Chiodo}
c_\vir(u_1, \dotsc, u_n) = p_*\left(\Td(T) \cup \Ch^X_Y(K(1))[X] \right) \in A_*(S)
\end{equation}
is exactly \cite[Lemma 5.3.8]{ChiodoJAG}.
Therefore, we obtain by Theorem \ref{loc Hodge}
\begin{eqnarray*}
c_\mathrm{top}(E^\vee) \cdot c_\vir(u_1, \dotsc, u_n) & = & \lim_{q\to 0} c^{\KK^*}_\mathrm{top}(E^\vee) \cdot (p \circ j)_*\(\cfrac{c^{\KK^*}_\mathrm{top}(j^*V)}{c^{\KK^*}_\mathrm{top}(N_j)} \cdot \cfrac{\Td(j^*T)}{\Td(j^*V)}\) \\
& = & \lim_{q\to 0} c^{\KK^*}_\mathrm{top}(E^\vee) \cdot \cfrac{c^{\KK^*}_\mathrm{top}(j^*V)}{c^{\KK^*}_\mathrm{top}(N_j)} \\
& = & \lim_{q\to 0} e_{q_{N+1}}(\mathbb{E}^\vee) \cdot \cfrac{e_{q_1}(B_1) \dotsm e_{q_N}(B_N)}{e_{q_1}(A_1) \dotsm e_{q_N}(A_N)},
\end{eqnarray*}
where we use that the normal bundle $N_j$ equals the vector bundle $A_1 \oplus \dotsb \oplus A_N$.
At last, the equality $e_q(E^\vee) = (-1)^{\rk E} e_{-q}(E)$ concludes the proof.
\end{proof}

\subsection{Comparaison with previous work}
Theorem \ref{mainFJRW} computes exactly the same class as \cite[Theorem 2.2]{Guere2}. We then have to compare the two formulae.

\begin{rem}
It is interesting to see that Equation \eqref{formFJRW} from Theorem \ref{loc Hodge} was already written in \cite[Equation (24)]{Guere2}, where it was deduced from the computation of the sum over dual graphs, see \cite[Section 3]{Guere2}.
In particular, the expression of Equation \eqref{formFJRW} as a sum over dual graphs is presented in \cite[Corollary 3.5]{Guere2}.
Nevertheless, we give below a more comprehensive way to understand the relationship between Theorem \ref{mainFJRW} and \cite[Theorem 2.2]{Guere2}.	
\end{rem}

In \cite{Guere1,Guere2}, we use the notion of recursive complexes to obtain the formula for the Hodge virtual cycle map and it uses a multiplicative characteristic class defined as follows.
On a vector bundle $V$ over a space $S$, it is
$$\fc_t(V) = \Ch(\lambda_{-t}V^\vee) \Td(V) \in A_*(S)[t]$$
and in terms of its roots $\alpha_1,\dotsc,\alpha_v$, it is
\begin{equation*}
\fc_t(V) = \prod_{k=1}^v \frac{e^{\alpha_k}-t}{e^{\alpha_k}-1} \cdot \alpha_k.
\end{equation*}
It is multiplicative on vector bundles and then extended multiplicatively to K-theory into a function $\fc_t \colon K^0(S) \rightarrow A_*(S)[\![ t ]\!]$.
We have the fundamental property
\begin{equation}\label{multiplicativity}
\forall R,R' \in K^0(S) ~,~~ \fc_t(R+R') = \fc_t(R) \cdot \fc_t(R').
\end{equation}

Note also that the characteristic class $\fc_t$ is actually defined for $t \neq 1$, and not only for a formal parameter $t$, using Chern characters. Explicitly, for $R \in K^0(S)$, we have
\begin{equation}\label{explfc}
\fc_t(R) = (1-t)^{\Ch_0(R)} \cdot \exp \(-\sum_{l \geq 1} s_l(t) \Ch_l(R)\),
\end{equation}
with the functions
\begin{equation}\label{parametresl}
s_l(t) = \cfrac{B_l(0)}{l} + (-1)^l \sum\limits_{k=1}^l (k-1)! \left( \frac{t}{1-t} \right)^k \gamma(l,k). 
\end{equation}
Here, the number $\gamma(l,k)$ is defined by the generating function
\begin{equation*}
\sum_{l \geq 0} \gamma(l,k) \frac{z^l}{l!} := \frac{(e^z-1)^k}{k!}.
\end{equation*}
We notice that $\gamma(l,k)$ vanishes for $k>l$ and that the sum over $l$ in \eqref{explfc} is finite because $\Ch_l$ vanishes for $l > \dim(S)$.

Similarly, the equivariant Euler class (see Definition \ref{equivEuler}) is actually defined for $q \neq 0$ using Chern characters.
Explicitly,  for $R \in K^0(S)$, we have
\begin{equation}\label{explc}
e_q(R) = q^{\Ch_0(R)} \cdot \exp \(-\sum_{l \geq 1} \cfrac{(l-1)!}{(-q)^l} \Ch_l(R)\).
\end{equation}
It also has the multiplicativity property
\begin{equation}\label{multiplicativity2}
\forall R,R' \in K^0(S) ~,~~ e_q(R+R') = e_q(R) \cdot e_q(R'),
\end{equation}
and in terms of roots $\alpha_1,\dotsc,\alpha_v$ of a vector bundle $V$, it takes the simple form
$$e_q(V) = \prod_{k=1}^v q+\alpha_k.$$

\begin{pro}
Let $R \in K^0(S)$ and $q$ be a formal parameter or be in $\KK^*$.
We have the relation
\begin{equation}\label{compfce}
e_q(R) = \fc_{e^{-q}}(R) \cdot \cfrac{\Td(R \otimes \cO(q))}{\Td(R)},
\end{equation}
where $\cO(q)$ is a formal line bundle with first Chern class $q$.
Precisely, we have
\begin{equation}\label{Tdform}
\cfrac{\Td(R \otimes \cO(q))}{\Td(R)} = \exp \(-\sum_{\substack{k \geq 0 \\ l>k}} \cfrac{B_l(0)}{l} q^{l-k} \Ch_k(R)\).
\end{equation}
\end{pro}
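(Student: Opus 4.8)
The plan is to reduce both identities to the case of a line bundle via the splitting principle, after which they become elementary manipulations of power series in the Chern roots and in $q$, with the Bernoulli numbers entering only through the standard expansion of the Todd class. Throughout, the series in $q$ are read in $A_*(S)_\QQ(\!(q)\!)$ (or $A_*(S)_\QQ[\![q]\!]$ when $q$ is formal); this is harmless since $\Ch_l$ vanishes for $l>\dim S$, so only finitely many Chow degrees occur, and $\Td(R\otimes\cO(q))/\Td(R)$ makes sense for arbitrary $R\in K^0(S)$ because the Todd class of any class is a unit.

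For \eqref{compfce}, observe first that both sides define multiplicative classes on $K^0(S)$: on the left by \eqref{multiplicativity2}; on the right, $\fc_{e^{-q}}$ is multiplicative by \eqref{multiplicativity}, the Todd class is multiplicative and invertible, and $R\mapsto R\otimes\cO(q)$ is additive on $K^0(S)$, so $R\mapsto\Td(R\otimes\cO(q))/\Td(R)$ is multiplicative too. Hence it suffices to verify \eqref{compfce} for a line bundle $L$ with first Chern class $\alpha$. There $e_q(L)=q+\alpha$, $\fc_{e^{-q}}(L)=\dfrac{(e^\alpha-e^{-q})\,\alpha}{e^\alpha-1}$, $\Td(L)=\dfrac{\alpha}{1-e^{-\alpha}}$, and $\Td(L\otimes\cO(q))=\dfrac{\alpha+q}{1-e^{-\alpha-q}}$; using $\dfrac{1-e^{-\alpha}}{e^\alpha-1}=e^{-\alpha}$,
\[
\fc_{e^{-q}}(L)\cdot\frac{\Td(L\otimes\cO(q))}{\Td(L)}
=(e^\alpha-e^{-q})\,e^{-\alpha}\cdot\frac{\alpha+q}{1-e^{-\alpha-q}}
=(1-e^{-\alpha-q})\cdot\frac{\alpha+q}{1-e^{-\alpha-q}}=\alpha+q=e_q(L).
\]
By the splitting principle this gives \eqref{compfce} on vector bundles, hence on all of $K^0(S)$ by multiplicativity; the computation being purely formal in $\alpha$ and $q$, it is equally valid for $q$ formal and for $q\in\KK^*$.

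For \eqref{Tdform}, I would start from $\log\Td(R)=-\sum_{l\ge1}\frac{B_l(0)}{l}\Ch_l(R)$: this is \eqref{explfc} at $t=0$, since $\lambda_0(R^\vee)=\cO$ forces $\fc_0=\Td$ and $s_l(0)=\frac{B_l(0)}{l}$ by \eqref{parametresl}; alternatively it follows from $\frac{d}{dx}\log\frac{x}{1-e^{-x}}=\frac1x-\frac1{e^x-1}$ together with $\frac{t}{e^t-1}=\sum_{n\ge0}B_n(0)\frac{t^n}{n!}$. Since $\Ch(\cO(q))=e^q$, one has $\Ch(R\otimes\cO(q))=e^q\,\Ch(R)$, i.e. $\Ch_l(R\otimes\cO(q))=\sum_{k=0}^{l}\frac{q^{l-k}}{(l-k)!}\Ch_k(R)$. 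Subtracting the two expansions of $\log\Td$, the $k=l$ terms cancel, and after reindexing the double sum over the pairs $l>k\ge0$ one obtains \eqref{Tdform}. I do not expect a genuine obstacle here: the two computations are routine, and the only point requiring a little care is the clean specialisation of $\fc_t$ at $t=0$, which the earlier formulas \eqref{explfc}--\eqref{parametresl} already provide.
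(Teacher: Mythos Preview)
Your proof is correct and follows essentially the same route as the paper: reduce \eqref{compfce} to the line-bundle case via multiplicativity and the splitting principle, then perform the identical elementary computation $\fc_{e^{-q}}(L)\cdot\Td(L\otimes\cO(q))/\Td(L)=\alpha+q$; for \eqref{Tdform} both arguments rest on the Bernoulli expansion $\log\Td(R)=-\sum_{l\ge1}\frac{B_l(0)}{l}\Ch_l(R)$ together with $\Ch_l(R\otimes\cO(q))=\sum_{k=0}^{l}\frac{q^{l-k}}{(l-k)!}\Ch_k(R)$. The only cosmetic differences are the order in which the two identities are treated and your additional remark that the Todd--Bernoulli expansion can be read off from \eqref{explfc}--\eqref{parametresl} at $t=0$.
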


\begin{proof}
The equality
$$\Td(R) = \exp \(-\sum_{l>1} \cfrac{B_l(0)}{l} \Ch_l(R)\)$$
is easy to prove using the multiplicativity of the Todd class and the formula
$$\Td(L) = \cfrac{c_1(L)}{1-e^{-c_1(L)}}$$
for a line bundle $L$.
Thus, we get Equation \eqref{Tdform} using
$$\Ch_l(R \otimes \cO(q)) = \sum_{k=0}^l \Ch_k(R) \cfrac{q^{l-k}}{(l-k)!}.$$
Similarly, since the classes $e_q$, $\fc_{e^{-q}}$, and $\Td$ are multiplicative, it is enough to check equation \eqref{compfce} on a line bundle $L$. Denoting $\alpha := c_1(L)$, we obtain
\begin{eqnarray*}
\fc_{e^{-q}}(L) \cdot \cfrac{\Td(L \otimes \cO(q))}{\Td(L)} & = & \cfrac{e^\alpha-e^{-q}}{e^\alpha-1} \cdot \alpha \cdot \cfrac{\alpha+q}{1-e^{-\alpha-q}} \cdot \cfrac{1-e^{-\alpha}}{\alpha} \\
& = & \alpha+q = e_q(L).
\end{eqnarray*}
\end{proof}

\begin{dfn}
We say that a formal power series in $q,q^{-1}$ is convergent when $q \to 0$ if all coefficients of negative powers in $q$ are zero.
These coefficients yield relations in the coefficient ring of the formal power series.
Moreover, we call limit of a convergent formal power series its constant term in $q$. 
\end{dfn}

\begin{cor}\label{tautrel}
Let $R_1, \dotsc,R_N \in K^0(S)$ and $k_1, \dotsc,k_N \in \ZZ$. We set $q_j:=k_j \cdot q$, $t_j:=t^{k_j}$, and $t:=e^{-q}$.
Then, the formal power series $\prod_{j=1}^N \fc_{t_j}(R_j)$ is convergent when $q \to 0$ if and only if the formal power series $\prod_{j=1}^N e_{q_j}(R_j)$ is convergent when $q \to 0$.
Furthermore, under convergence, we have
$$\lim_{q \to 0} \prod_{j=1}^N e_{q_j}(R_j) = \lim_{q \to 0} \prod_{j=1}^N \fc_{t_j}(R_j) = \lim_{t \to 1} \prod_{j=1}^N \fc_{t_j}(R_j)$$
and the two sets of relations in $A_*(S)$ are equivalent.
\end{cor}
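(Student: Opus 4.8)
The plan is to derive everything from the comparison identity \eqref{compfce}, reducing the statement to the observation that the discrepancy between $e_q$ and $\fc_{e^{-q}}$ is a harmless Todd correction factor. First I would apply \eqref{compfce} with $q$ replaced by $q_j = k_j q$ and $R$ by $R_j$; since $e^{-q_j} = (e^{-q})^{k_j} = t^{k_j} = t_j$, this gives
\[
e_{q_j}(R_j) = \fc_{t_j}(R_j) \cdot \cfrac{\Td(R_j \otimes \cO(k_j q))}{\Td(R_j)},
\]
and multiplying over $j = 1, \dotsc, N$ yields
\[
\prod_{j=1}^N e_{q_j}(R_j) = \(\prod_{j=1}^N \fc_{t_j}(R_j)\) \cdot C(q), \qquad C(q) := \prod_{j=1}^N \cfrac{\Td(R_j \otimes \cO(k_j q))}{\Td(R_j)}.
\]
By formula \eqref{Tdform}, each factor of $C(q)$ equals $\exp\bigl(-\sum_{k \geq 0,\, l > k} \tfrac{B_l(0)}{l}(k_j q)^{l-k}\Ch_k(R_j)\bigr)$, in which every power of $q$ that appears is at least $1$ because $l > k$; hence $C(q) \in A_*(S)[\![q]\!]$ with constant term $C(0) = 1$, so $C(q)$ is a unit of $A_*(S)[\![q]\!]$.

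Next I would pin down the ring in which these products live. Using the explicit expressions \eqref{explfc}, \eqref{parametresl} and \eqref{explc}, each of $\fc_{t_j}(R_j) = \fc_{e^{-q_j}}(R_j)$ and $e_{q_j}(R_j)$ is a Laurent series in $q$ with only finitely many negative powers: the negative part comes only from $(1-e^{-q_j})^{\Ch_0(R_j)}$, from the powers of $\tfrac{t}{1-t}$ inside the functions $s_l$, and from the $q^{-l}$ in \eqref{explc}, while the exponential factors are finite sums because the positive-codimension classes on $S$ form a nilpotent ideal. Hence both sides of the displayed identity lie in $A_*(S)(\!(q)\!)$, and $C(q)$ is a unit in the subring $A_*(S)[\![q]\!]$. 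Multiplication by such a unit does not change the lowest power of $q$ that occurs, so $\prod_j e_{q_j}(R_j)$ has no negative powers of $q$ if and only if $\prod_j \fc_{t_j}(R_j)$ does; this is the claimed equivalence of convergence, and when both converge, setting $q = 0$ and using $C(0) = 1$ gives $\lim_{q \to 0} \prod_j e_{q_j}(R_j) = \lim_{q \to 0} \prod_j \fc_{t_j}(R_j)$.

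For the remaining equality I would compare the two natural expansions of $\prod_j \fc_{t_j}(R_j)$: directly as a Laurent series in $1-t$ via \eqref{explfc}, and as a Laurent series in $q$ after the substitution $t = e^{-q}$. Since $1 - e^{-q} = q - \tfrac{q^2}{2} + \dotsb$ is $q$ times a unit of $\QQ[\![q]\!]$, this substitution is an order-preserving isomorphism $A_*(S)(\!(1-t)\!) \to A_*(S)(\!(q)\!)$, so both the vanishing of the negative-order coefficients and the value of the constant term are preserved; hence $\lim_{t \to 1} \prod_j \fc_{t_j}(R_j) = \lim_{q \to 0} \prod_j \fc_{t_j}(R_j)$. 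Finally, writing $\prod_j e_{q_j}(R_j) = \sum_m d_m q^m$, $\prod_j \fc_{t_j}(R_j) = \sum_m b_m q^m$ and $C(q) = \sum_{i \geq 0} c_i q^i$ with $c_0 = 1$, the relation $d_m = b_m + c_1 b_{m-1} + c_2 b_{m-2} + \dotsb$ exhibits the finitely many negative-degree coefficients $d_m$ as a unipotent triangular $\QQ$-linear transform of the $b_{m'}$ (and conversely); therefore the two sets of relations $\{d_m = 0 : m < 0\}$ and $\{b_m = 0 : m < 0\}$ in $A_*(S)$ are equivalent.

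The only delicate point, and hence the expected main obstacle, is the bookkeeping: making precise in which completed or Laurent ring over $A_*(S)$ each of $e_{q_j}(R_j)$, $\fc_{t_j}(R_j)$ and $C(q)$ lives, and in particular verifying that $C(q)$ is a genuine unit power series with trivial constant term. Once this is settled through \eqref{Tdform}, \eqref{explfc}, \eqref{parametresl} and \eqref{explc}, the rest is formal manipulation of the identity \eqref{compfce}.
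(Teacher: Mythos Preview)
Your proof is correct and follows exactly the paper's approach: both arguments reduce to the observation, via \eqref{compfce} and \eqref{Tdform}, that the ratio $\prod_j e_{q_j}(R_j)/\prod_j \fc_{t_j}(R_j)$ is a power series in $q$ with constant term $1$, from which all the claims follow. Your write-up is simply a more detailed unpacking of what the paper compresses into ``$\prod_j e_{q_j}(R_j)/\fc_{t_j}(R_j) = 1 + O(q)$ and the claims follow easily''; the only slip is calling the triangular transformation ``$\QQ$-linear'' when the coefficients $c_i$ lie in $A_*(S)$, but since $c_0=1$ the unipotent-triangular inversion argument goes through unchanged.
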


\begin{proof}
Since we have
$$\exp \(-\sum_{\substack{k \geq 0 \\ l>k}} \cfrac{B_l(0)}{l} q^{l-k} \Ch_k(R)\) = 1+\frac{q}{2}+q^2 \cdot f(q) + q \cdot g(q),$$
with $f(q) \in \CC[\![q]\!]$ and $g(q) \in A^{\deg \geq 1}(S)[\![q]\!]$, then we get
$$\prod_{j=1}^N \cfrac{e_{q_j}(R_j)}{\fc_{t_j}(R_j)} = 1+ O(q)$$
and the claims follow easily.
\end{proof}

\begin{cor}\label{comp1}
Theorem \ref{mainFJRW} gives the same result as \cite[Theorem 2.2]{Guere2}.
In particular, it recovers \cite[Theorem 3.21]{Guere1} as a special case for genus zero.
Furthermore, the set of tautological relations presented in \cite[Section 2.3]{Guere2} is equivalent to the set of tautological relations obtained by looking at the coefficients of negative powers of $q$ in the right-hand side of Equation \eqref{formFJRW}.
\end{cor}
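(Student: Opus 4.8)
The plan is to deduce all three assertions from Corollary~\ref{tautrel}, which is exactly the dictionary between the equivariant Euler classes $e_{q_j}$ appearing in Theorem~\ref{mainFJRW} and the multiplicative characteristic classes $\fc_{t_j}$ of \cite{Guere1,Guere2}. First I would recall the precise shape of \cite[Theorem~2.2]{Guere2}: it expresses $\lambda_g\cdot c_\vir(u_1,\dotsc,u_n)$ as $\lim_{t\to 1}$ of a product $\fc_{t_{N+1}}(\mathbb{E})\cdot\prod_{j=1}^N\fc_{t_j}(-R^\bullet\pi_*\cL^\rR_j)$, the parameters being $t_j=t^{k_j}$ with $k_j=(-a_1)\dotsm(-a_{j-1})$ for $1\le j\le N$ (so $k_1=1$) and $t_{N+1}=t^{-k_{N+1}}$ with $k_{N+1}=(-a_1)\dotsm(-a_N)$. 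These are precisely the exponents $q_j=k_j\,q$ of Equation~\eqref{formFJRW}, the sign on the Hodge parameter matching the symbol $e_{-q_{N+1}}(\mathbb{E})$. The only non-formal point in setting this up is that the twisted line bundles $\cL^\rR_j$ and the morphism $\widetilde{\alpha}_{N+1}$ used above are the same objects as in \cite{Guere2}, which is immediate from the citations given in the proof of Theorem~\ref{mainFJRW}.

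Next I would apply Corollary~\ref{tautrel} to the classes $R_j=-R^\bullet\pi_*\cL^\rR_j$ for $1\le j\le N$ together with $R_{N+1}=\mathbb{E}$, and the integers $k_1,\dotsc,k_N,-k_{N+1}$. This yields at once that the formal power series on the right-hand side of Equation~\eqref{formFJRW} converges as $q\to 0$ if and only if $\fc_{t_{N+1}}(\mathbb{E})\cdot\prod_{j=1}^N\fc_{t_j}(-R^\bullet\pi_*\cL^\rR_j)$ converges as $t\to 1$, that the two limits agree in $A_*(S)$, and that the two sets of relations extracted from the coefficients of negative powers of $q$ (equivalently, of $t-1$) are equivalent. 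Since Theorem~\ref{mainFJRW} identifies the first limit with $\lambda_g\cdot c_\vir(u_1,\dotsc,u_n)$ and \cite[Theorem~2.2]{Guere2} identifies the second with the same class, the two formulae agree and their tautological relations coincide, which settles the first and third assertions. I would stress here that $\fc_t$ is defined only for $t\neq 1$, so the symbol $\lim_{t\to 1}$ in \cite[Theorem~2.2]{Guere2} acquires meaning precisely through the convergence statement of Corollary~\ref{tautrel}; this is the only point that is not pure bookkeeping.

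For the genus-zero claim, the Hodge bundle has rank $g=0$, hence $\lambda_0=c_\mathrm{top}(\mathbb{E})=1$ and likewise $e_{-q_{N+1}}(\mathbb{E})=1$ by Definition~\ref{equivEuler}; Equation~\eqref{formFJRW} then collapses to $c_\vir(u_1,\dotsc,u_n)=\lim_{q\to 0}\prod_{j=1}^N e_{q_j}(-R^\bullet\pi_*\cL^\rR_j)$, and a second application of Corollary~\ref{tautrel}, now with the $N$ classes $R_1,\dotsc,R_N$ only, rewrites the right-hand side as $\lim_{t\to 1}\prod_{j=1}^N\fc_{t_j}(-R^\bullet\pi_*\cL^\rR_j)$, which is the statement of \cite[Theorem~3.21]{Guere1}.

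The only genuine work is thus the matching of parameters and $K$-theory classes with those of \cite{Guere1,Guere2}, and I expect this to be the main obstacle only in the sense of careful bookkeeping: keeping track of the exponents $k_j=(-a_1)\dotsm(-a_{j-1})$, reconciling the minus sign in $e_{-q_{N+1}}(\mathbb{E})$ with the dualization $e_q(E^\vee)=(-1)^{\rk E}e_{-q}(E)$ already used in the proof of Theorem~\ref{mainFJRW}, and checking that the resolutions $R\pi_*\cL^\rR_j=[A_j\to\widetilde{B}_j]$ chosen there are compatible with those underlying \cite[Theorem~2.2]{Guere2}. No new geometric input is needed beyond Theorem~\ref{mainFJRW} and Corollary~\ref{tautrel}.
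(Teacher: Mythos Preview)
Your proposal is correct and follows exactly the route the paper intends: the paper states Corollary~\ref{comp1} immediately after Corollary~\ref{tautrel} with no separate proof, leaving it as a direct application of the dictionary between $e_{q_j}$ and $\fc_{t_j}$ to the $K$-theory classes $-R^\bullet\pi_*\cL^\rR_j$ and $\mathbb{E}$, which is precisely what you spell out. Your additional care with the parameter matching $q_j=k_jq$, the sign on the Hodge term, and the genus-zero specialization $\mathbb{E}=0$ simply makes explicit what the paper leaves implicit.
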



\subsection{Equivariant mirror symmetry}
\cite[Section 4]{Guere1} can be entirely rewritten with the specialization of the twisted theory given by
$$s_0^j := \frac{1}{q_j} \quad \textrm{and} \quad s_l^j := \cfrac{(l-1)!}{(-q_j)^l} ~~ \textrm{for $l \geq 1$,}$$
with $q_1 := q$ and $q_{j+1} := (-a_1) \dotsm (-a_j) ~ q$ for $1 \leq j < N$, instead of the specialization given by \cite[Equation (67)]{Guere1}.
According to Corollary \ref{comp1}, it recovers the same big I-function of \cite[Theorem 4.2]{Guere1} and the same small I-function and Picard--Fuchs equation of \cite[Theorem 4.4]{Guere1} once we take the limit $q \to 0$.
However, it is interesting to consider the equivariant version of these results, i.e.~without taking the limit $q \to 0$.

In \cite[Theorem 4.2]{Guere1}, the only change is that, with the notations from there, the contribution $M_j(\overline{\gamma})$ becomes
\begin{equation*}
M_j(\overline{\gamma}) = \left\lbrace
\begin{split}
\prod_{0 \leq m \leq \cD^\rR_j(\overline{\gamma}) - 1}  (\omega^\rR_j(\overline{\Gamma}) + m) z + q_j & \qquad \textrm{ when $\cD^\rR_j(\overline{\gamma}) \geq 1$}, \\
1 & \qquad \textrm{ when $\cD^\rR_j(\overline{\gamma}) = 0$}, \\
\prod_{1 \leq m \leq -\cD^\rR_j(\overline{\gamma})}  \cfrac{1}{(\omega^\rR_j(\overline{\Gamma}) - m) z + q_j}, & \qquad \textrm{ when $\cD^\rR_j(\overline{\gamma}) \leq -1$}. \\
\end{split}
\right.
\end{equation*}
As a consequence, we deduce the equivariant small I-funtion for chain polynomials of Calabi--Yau type\footnote{The Calabi--Yau condition means that the degree equals the sum of the weights, i.e.~$d=w_1+\dotsb+w_N$.}

\begin{thm}[Equivariant Mirror Symmetry]\label{equiMS}
Let $W$ be a chain polynomial of Calabi--Yau type.
The equivariant I-function\footnote{There is a typo in \cite[Equation (98)]{Guere1} as the condition $b \geq 0$ is not specified.} defined for $t \in \CC^*$ by
\begin{equation}\label{finalformulaI}
I(t,-z) = -z \sum_{k=1}^\infty t^k \frac{\prod_{j=1}^N \prod_{\substack{\delta_j < b < \fq_j k \\ b \geq 0, \langle b \rangle = \langle \fq_j k \rangle}} (bz+q_j)}{\prod_{0 < b < k} bz} e_{\grj^k}, \quad \delta_j := - \delta_{\{N-j \textrm{ is odd}\}}
\end{equation}
lies on the Lagrangian cone $\cL$ of the equivariant FJRW theory of the Landau--Ginzburg orbifold $(W,G)$. 
This function satisfies the Picard--Fuchs equation
\begin{equation}\label{PicardFuchsequation}
\biggl[ t^d \prod_{j=1}^N \prod_{c=0}^{w_j-1} (\fq_j z t \frac{\partial}{\partial t} + cz + q_j) - \prod_{c=1}^d (z t \frac{\partial}{\partial t} - c z)\biggr] \cdot I(t, -z) = 0.
\end{equation}
\end{thm}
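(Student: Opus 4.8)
The plan is to run the argument of \cite[Section 4]{Guere1} verbatim, but with the deformation parameters $s_l^j$ specialized as in the displayed formulas just before the theorem statement (rather than as in \cite[Equation (67)]{Guere1}) and \emph{without} taking the limit $q\to 0$. By Corollary \ref{comp1}, the Hodge virtual cycle map attached to this specialization agrees with the one computed in \cite[Theorem 2.2]{Guere2}, and the equivariant Euler classes $e_{q_j}$ are precisely the characteristic classes appearing in Equation \eqref{formFJRW}; so the twisted FJRW theory defined by these $s_l^j$ is literally the equivariant FJRW theory of $(W,G)$, and its Lagrangian cone $\cL$ is the object in the statement. The first step is therefore bookkeeping: record that the generating function of the twisted invariants is obtained from the untwisted one by the universal transformation $u_i \mapsto \sum_j \fc_{t_j}(\cdots) u_i$, exactly as in \cite{Guere1}, and identify the resulting modification of the $I$-function; this is where the factor $M_j(\overline\gamma)$ picks up the extra summand $q_j$ in the numerator (resp.\ denominator) displayed above, replacing the pure monomial $(\omega_j^\rR(\overline\Gamma)+m)z$ of the non-equivariant case.

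Second, I would assemble the small $I$-function from the modified big $I$-function by the same restriction to the relevant stratum of insertions (narrow sectors of the form $\grj^k$) used in \cite[Theorem 4.4]{Guere1}. The Calabi--Yau hypothesis $d=w_1+\dots+w_N$ is what makes the string of $M_j$-factors collapse into the product $\prod_j\prod_{\delta_j<b<\fq_jk,\ b\ge0,\ \langle b\rangle=\langle\fq_jk\rangle}(bz+q_j)$ over the denominator $\prod_{0<b<k}bz$; the combinatorics of which $b$ survive is identical to \cite{Guere1}, only now each linear form $bz$ is shifted to $bz+q_j$ according to which variable $x_j$ the index $b$ comes from. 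Membership of $I(t,-z)$ on $\cL$ then follows because $\cL$ is, by construction of the twisted theory, invariant under these modifications: the modified $I$-function is the image of the original $J$-function (which lies on the cone) under the corresponding symplectic transformation, so it lies on the modified cone $\cL$. This is the conceptual heart but it is a direct transcription of \cite[Section 4]{Guere1}.

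Third, the Picard--Fuchs equation: the operator $t^d\prod_{j=1}^N\prod_{c=0}^{w_j-1}(\fq_j zt\partial_t+cz+q_j)-\prod_{c=1}^d(zt\partial_t-cz)$ annihilates $I(t,-z)$ by the usual term-by-term computation. Acting on $t^k e_{\grj^k}$, the operator $\fq_j zt\partial_t+cz+q_j$ multiplies the coefficient by $(\fq_j k+c)z+q_j$, while $zt\partial_t-cz$ multiplies by $(k-c)z$; shifting $k\mapsto k+1$ in the second term and matching ratios of consecutive coefficients reduces the identity to the factorization of the product over $b$, exactly as in \cite{Guere1} but with the $q_j$-shift carried along. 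I would verify the index ranges $\delta_j<b<\fq_jk$ and the congruence condition $\langle b\rangle=\langle\fq_jk\rangle$ are exactly what is needed for the telescoping of numerator and denominator factors under $k\mapsto k+1$.

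The main obstacle I anticipate is not any single hard estimate but making sure the equivariant parameters $q_j=(-a_1)\cdots(-a_{j-1})q$ are threaded consistently through every step of \cite[Section 4]{Guere1}: in the non-equivariant limit many of the factors $bz+q_j$ degenerate to $bz$ and several potential sign ambiguities and edge cases (notably the $b=0$ versus $b>0$ boundary flagged in the footnote, and the parity correction $\delta_j=-\delta_{\{N-j\ \mathrm{odd}\}}$) become invisible; keeping them straight in the equivariant setting, and checking that the Calabi--Yau collapse of the $M_j$-product still produces exactly the stated range of $b$, is the delicate bookkeeping that the proof must get right.
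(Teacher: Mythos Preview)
Your proposal is correct and follows essentially the same approach as the paper. The paper's ``proof'' is in fact just the paragraph preceding the theorem: it instructs the reader to rerun \cite[Section~4]{Guere1} with the new specialization of the parameters $s_l^j$ (corresponding via \eqref{explc} to the equivariant Euler classes $e_{q_j}$), records that the only change in the big $I$-function of \cite[Theorem~4.2]{Guere1} is the displayed modification of $M_j(\overline{\gamma})$, and then states the resulting small $I$-function and Picard--Fuchs equation; your three-step outline (identify the twisted theory with the equivariant one, collapse the $M_j$-product under the Calabi--Yau hypothesis to obtain \eqref{finalformulaI}, and verify \eqref{PicardFuchsequation} by the coefficient-ratio telescoping) is exactly this, only written out in more detail than the paper provides.
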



\begin{bibdiv}
\begin{biblist}

\bib{Atiyah}{article}{
	author={Atiyah, Michael},
	author={Bott, Raoul},
	title={The moment map and equivariant cohomology},
	journal={Topology},
	volume={23},
	date={1984},
	number={},
	pages={1-28},
}


\bib{BuryakDR}{article}{
	author={Buryak, Alexandr},
	title={Double ramification cycles and integrable hierarchies},
	journal={Communications in Mathematical Physics},
	volume={336},
	date={2015},
	number={3},
	pages={1085-1107},
}

\bib{Guere3}{article}{
	author={Buryak, Alexandr},
	author={Gu\'er\'e, J\'er\'emy},
	title={Towards a description of the double ramification hierarchy for Witten's r-spin class},
	journal={Journal de Math\'ematiques Pures et Appliqu\'ees},
	volume={106},
	date={2016},
	number={5},
	pages={837-865},
}

\bib{ChiodoJAG}{article}{
	author={Chiodo, Alessandro},
	title={The Witten top Chern class via K-theory},
	journal={Journal of Algebraic Geometry},
	volume={15},
	date={2006},
	number={},
	pages={681-707},
}

\bib{LGCY}{article}{
	author={Chiodo, Alessandro},
	author={Iritani, Hiroshi},
	author={Ruan, Yongbin},
	title={Landau--Ginzburg/Calabi--Yau correspondence, global mirror symmetry and Orlov equivalence},
	journal={Publications math\'ematiques de l'IH\'ES},
	volume={119},
	date={2014},
	number={1},
	pages={127-216},
}

\bib{Guere8}{article}{
	author={Ciocan-Fontanine, Ionut},
	author={Favero, David},
	author={Gu\'er\'e, J\'er\'emy},
	author={Kim, Bumsig},
	author={Shoemaker, Mark},
	title={Fundamental Factorization of a GLSM, Part I: Construction},
	journal={available at arXiv:1802.05247},
	volume={},
	date={},
	number={},
	pages={},
}

\bib{Edidin}{article}{
	author={Edidin, Dan},
	author={Graham, William},
	title={Equivariant intersection theory},
	journal={Inventiones Math.},
	volume={131},
	date={1998},
	number={},
	pages={595-634},
}

\bib{Edidin2}{article}{
	author={Edidin, Dan},
	author={Graham, William},
	title={Localization in equivariant intersection theory and the Bott residue formula},
	journal={American Journal of Mathematics},
	volume={120},
	date={1998},
	number={3},
	pages={619-636},
}

\bib{FJRW1}{article}{
	author={Fan, Huijun},
	author={Jarvis, Tyler},
	author={Ruan, Yongbin},
	title={The Witten equation, mirror symmetry and quantum singularity theory},
	journal={Ann. of Math.},
	volume={178},
	date={2013},
	number={1},
	pages={1-106},
}   

\bib{FJRW2}{article}{
	author={Fan, Huijun},
	author={Jarvis, Tyler},
	author={Ruan, Yongbin},
	title={The Witten equation and its virtual fundamental cycle},
	journal={available at arXiv:0712.4025},
	volume={},
	date={},
	number={},
	pages={},
} 

\bib{GLSM}{article}{
	author={Fan, Huijun},
	author={Jarvis, Tyler},
	author={Ruan, Yongbin},
	title={A mathematical theory of the gauged linear sigma model},
	journal={Geometry and Topology},
	volume={},
	date={2017},
	number={},
	pages={},
} 

\bib{Fulton}{article}{
        author={Fulton, William},
        title={Intersection theory},
        journal={Springer--Verlag},
        volume={},
        date={},
        number={},
        pages={},
      }

\bib{Guere1}{article}{
	author={Gu\'er\'e, J\'er\'emy},
	title={A Landau--Ginzburg Mirror Theorem without Concavity},
	journal={Duke Mathematical Journal},
	volume={165},
	date={2016},
	number={13},
	pages={2461-2527},
}

\bib{Guere2}{article}{
	author={Gu\'er\'e, J\'er\'emy},
	title={Hodge integrals in FJRW theory},
	journal={Michigan Mathematical Journal},
	volume={66},
	date={2017},
	number={4},
	pages={831-854},
}

\bib{Guere10}{article}{
	author={Gu\'er\'e, J\'er\'emy},
	title={Hodge--Gromov--Witten theory},
	journal={},
	volume={},
	date={2019},
	number={},
	pages={},
}

\bib{Jeongseok}{article}{
	author={Kim, Bumsig},
	author={Oh, Jeongseok},
	title={Localized Chern characters for $2$-periodic complexes},
	journal={avalaible at arXiv:1804.03774},
	volume={},
	date={},
	number={},
	pages={},
}

\bib{PV algebraic construction}{article}{
   author={Polishchuk, Alexander},
   author={Vaintrob, Arkady},
   title={Algebraic construction of Witten's top Chern class},
   journal={Contemp. Math.},
   volume={276},
   date={2001},
   number={},
   pages={},
 }    

\bib{Polish1}{article}{
	author={Polishchuk, Alexander},
	author={Vaintrob, Arkady},
	title={Matrix factorizations and cohomological field theories},
	journal={J. Reine Angew. Math.},
	volume={714},
	date={2016},
	number={},
	pages={1-122},
}

\end{biblist}
\end{bibdiv}

\end{document}